\newcommand{\optval}{\mathop{\operator@font optval}}
\def\Rp{{R_+}}
\def\Rz{{R_0}}
\def\Cp{{C_+}}
\def\Cz{{C_0}}
\def\Rk{{R_k}}
\def\Ck{{C_k}}
\def\Rkm{{R_{k-1}}}
\def\Ckm{{C_{k-1}}}
\def\Rkp{{R_{k+1}}}
\def\Ckp{{C_{k+1}}}
\begin{document}

\title{ON THE EXISTENCE OF A SHORT PIVOTING SEQUENCE FOR A LINEAR PROGRAM}

\markboth{On decomposition of a matrix by convex combinations}{}

\author{Anders FORSGREN\thanks{\footAF} \and Fei WANG\thanks{\footFW}}

\def\footAF{Optimization and Systems Theory, Department of
  Mathematics, KTH Royal Institute of Technology, SE-100 44 Stockholm,
  Sweden ({\tt andersf@kth.se}).  Research supported by the Swedish
  Research Council (VR).}

\def\footFW{Optimization and Systems Theory, Department of
  Mathematics, KTH Royal Institute of Technology, SE-100 44 Stockholm,
  Sweden ({\tt fewa@kth.se}).}


\maketitle\thispagestyle{empty}

\begin{abstract}
  Pivoting methods are of vital importance for linear programming, the
  simplex method being the by far most well-known. In this paper, a
  primal-dual pair of linear programs in canonical form is
  considered. We show that there exists a sequence of pivots, whose
  length is bounded by the minimum dimension of the constraint matrix,
  such that the pivot creates a nonsingular submatrix of the
  constraint matrix which increases by one row and one column at each
  iteration. Solving a pair of linear equations for each of these
  submatrices generates a sequence of optimal solutions of a
  primal-dual pair of linear programs of increasing dimensions,
  originating at the origin. The optimal solutions to the original
  primal-dual pair of linear programs are obtained in the final step.

  It is only an existence result, we have not been able to generate
  any rules based on properties of the problem to generate the
  sequence. The result is obtained by a decomposition of the final
  basis matrix.
\end{abstract}

\section{Introduction}

Pivoting methods for linear programming are based on solving a
sequence of linear system of equations determined by a square
submatrix of the constraint matrix, that typically changes by one
column and/or one row in between iterations. The simplex
method~\cite{Dan63} is probably by far the most well-known, but we
also want to mention criss-cross methods~\cite{Ter85,FT97,BonM03} and
Lemke's method~\cite{lemke1954}. 

We consider a primal-dual pair of linear programs in canonical form
\[
(PLP) \quad
 \begin{array}{ll}
 \minimize{x\in\Re^n} & c\T x  \\ [4pt]
 \subject           & A x \ge b, \\
                    & x \ge 0,
 \end{array}
\quad\quad
(DLP) \quad
 \begin{array}{ll}
 \maximize{y\in\Re^m} & b\T y  \\ [4pt]
 \subject           & A\T y \le c, \\
                    & y \ge 0.
 \end{array}
\]
We show that if $(PLP)$ and $(DLP)$ are both feasible, then there
exists a nonnegative integer $r$, with $r\le\min\{m,n\}$, and a
sequence of pivots $(i_k,j_k)$, $k=1,\dots,r$, which generate sets of
row indices $R_k=\cup_{l=1}^k\{i_l\}$ and columns indices
$C_k=\cup_{l=1}^k\{j_l\}$, with $i_{k+1}\in\{1,\dots,m\}\backslash
\Rk$ and $j_{k+1}\in\{1,\dots,n\}\backslash \Ck$, such that
$A_{R\k\Ck}$ is nonsingular, and if $x_\Ck$ and $y_\Rk$ are computed
from
\[
A_{\Rk\Ck} x_\Ck = b_\Rk, \quad A_{\Rk\Ck}^T y_\Rk = c_\Ck, 
\]
they are nonnegative, and therefore optimal to $(PLP_k)$ and $(DLP_k)$
respectively, where
\[
(PLP_k) \quad
 \begin{array}{ll}
 \minimize{x_\Ck\in\Re^k} & c_\Ck^T x_\Ck\drop  \\ [4pt]
 \subject           & A_{\Rk\Ck}\drop x_\Ck\drop \ge b_\Rk\drop, \\
                    & x_\Ck\drop \ge 0,
 \end{array}
\quad\quad
(DLP_k) \quad
 \begin{array}{ll}
 \maximize{y_\Rk\in\Re^k} & b_\Rk^T y_\Rk\drop  \\ [4pt]
 \subject           & A_{\Rk\Ck}^T y_\Rk\drop \le c_\Ck\drop, \\
                    & y_\Ck\drop \ge 0.
 \end{array}
\]
Finally, $x_{C_r}\drop$ and $y_{R_r}\drop$ are not only optimal to
$(PLP_r)$ and $(DLP_r)$, but together with $x_j=0$ for
$j\in\{1,\dots,n\}\backslash C_r$ and $y_i=0$ for
$i\in\{1,\dots,m\}\backslash R_r$ they also give optimal solutions to
$(PLP)$ and $(DLP)$ respectively. We refer to such a sequence of
pivots as a \emph{short} sequence of pivots. The existence of this
short sequence of pivots is shown by a decomposition of the optimal
basis matrix.

We also give a related result for a slightly more structured linear
program, which is a min-max problem for a given $m\times
n$ matrix $M$, formulated as the following primal-dual pair of linear
programs
\begin{displaymath}
(P)\quad
  \begin{array}{ll}
     \minimize{u\in\Re^n,\alpha\in\Re}    & \alpha \\
     \subject & Mu + e \alpha \ge 0, \\
              & e\T u = 1, \\
              & u\ge 0,
  \end{array}
\qquad\quad
(D)\quad
  \begin{array}{ll}
     \maximize{v\in\Re^m,\beta\in\Re} & \beta \\
     \subject & M\T v +e \beta \le 0, \\
         & e\T v = 1, \\
         & v \ge 0.
  \end{array}
\end{displaymath}

For the general linear program, we cannot relate the short sequence of
pivots to monotonicity in objective function value, whereas this can
be done for the min-max problem. The difference is that $(P)$ and
$(D)$ are both defined on the unit simplex, and they are both always
feasible.

The results are straightforward, but to the best of our knowledge, our
result on the existence of a sequence of pivots of length at most
$\min\{m,n\}$ improves on what is known. Fukuda, Lühti and
Namiki~\cite{FLN97} and Fukuda and Terlaky~\cite{FT99} show that there
is a sequence of pivots of length bounded by at most $m+n$ leading to
the optimal solution. It should be pointed out that our existence
result does not automatically give a method with better worst-case
complexity than enumeration of all potential basis matrices. We have
not been able to use the information to give rules based on global
information that makes use of the short sequence of pivots.

\section{Existence of a short pivoting sequence for a linear program}

We will refer to a nonsingular square submatrix of $A$ as a
\emph{basis matrix}. If $\Rp$ and $\Cp$ denote the row and column
indices of $A$ that define the basis matrix, the basis matrix is
referred to as $A_{\Rp\Cp}$. If $\Rz$ and $\Cz$ denote the remaining
row and column indices, the primal and dual \emph{basic solutions}
associated with $A_{\Rp\Cp}$ are uniquely given by
\begin{dbleqnarray*}
A_{\Rp\Cp}\drop x_{\Cp}\drop & = & b_\Rp\drop, & A_{\Rp\Cp}^T y_\Rp\drop & =
& c_\Cp\drop,  \\ 
       x_\Cz\drop & = & 0, &        y_\Rz\drop & = & 0.
\end{dbleqnarray*}
The primal-dual pair of basic solutions given by the basis matrix is
optimal to $(PLP)$ and $(DLP)$ respectively if and only if the
solutions are feasible to $(PLP)$ and $(DLP)$ respectively, i.e.,
\begin{dbleqnarray*}
A_{\Rz\Cp} x_\Cp\drop & \ge & b_\Rz\drop, & A_{\Rp\Cz}^T y_\Rp\drop & \le &
c_\Cz\drop,  \\ 
       x_\Cp\drop & \ge & 0, &        y_\Rp\drop & \ge & 0.
\end{dbleqnarray*}
If $(PLP)$ and $(DLP)$ are both feasible, then there exists at least
one basis matrix which gives a primal-dual optimal pair of basic
solutions. This well-known result is summarized in the following
lemma.

\begin{lemma}[Existence of optimal basic feasible 
  solution]\label{lem:partitionLP} 
  Assume that both $(PLP)$ and $(DLP)$ are feasible. Then, there is a
  partitioning of the row indices of $A$ into two sets $\Rp$ and
  $\Rz$, and a partitioning of the column indices of $A$ into two sets
  $\Cp$ and $\Cz$, so that $\abs{\Rp}=\abs{\Cp}$, and associated with
  the resulting matrix
\[
\mtx{cc}{ A_{\Rp \Cp} & A_{\Rp \Cp} \\
          A_{\Rz \Cp} & A_{\Rz \Cz} },
 \]
 the submatrix $A_{\Rp \Cp}$ is nonsingular, and there are vectors $x$
 and $y$ for which
\begin{dbleqnarray*}
A_{\Rp\Cp}\drop x_{\Cp}\drop & = & b_\Rp\drop, & A_{\Rp\Cp}^T y_\Rp\drop & =
& c_\Cp\drop,  \\ 
A_{\Rz\Cp}\drop x_\Cp\drop & \ge & b_\Rz\drop, & A_{\Rp\Cz}^T y_\Rp\drop & \le &
c_\Cz\drop,  \\ 
       x_\Cp\drop & \ge & 0, &        y_\Rp\drop & \ge & 0, \\
       x_\Cz\drop & = & 0, &        y_\Rz\drop & = & 0,
\end{dbleqnarray*}
hold. These vectors $x$ and $y$ are optimal solutions to $(PLP)$ and
$(DLP)$ respectively.
\end{lemma}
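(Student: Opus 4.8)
The plan is to derive this from strong duality together with a rank argument. First I would invoke strong duality for linear programming: since $(PLP)$ and $(DLP)$ are both feasible, both have optimal solutions $\xstar$ and $\ystar$ with $c\T\xstar = b\T\ystar$, and complementary slackness holds. Among all optimal primal solutions, I would pick one that is a vertex of the feasible polyhedron $\{x : Ax \ge b,\ x \ge 0\}$ (this is the classical statement that an LP with an optimal solution has one attained at a vertex, using the fact that the feasible region contains no line); similarly pick $\ystar$ to be a vertex of $\{y : A\T y \le c,\ y \ge 0\}$. Then let $\Cp = \{j : \xstar_j > 0\}$ be the support of the primal vertex and $\Rp = \{i : \ystar_i > 0\}$ be the support of the dual vertex, with $\Cz$ and $\Rz$ the complementary index sets. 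By construction $x_\Cz = 0$ and $y_\Rz = 0$, and $x_\Cp \ge 0$, $y_\Rp \ge 0$ hold with strict positivity.

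Next I would extract the defining equations. Complementary slackness between $\xstar$ and the dual constraints gives $(A\T\ystar - c)_j = 0$ for $j \in \Cp$, i.e. $A_{\Rp\Cp}\T y_\Rp = c_\Cp$ (using $y_\Rz = 0$); symmetrically, complementarity between $\ystar$ and the primal constraints gives $(A\xstar - b)_i = 0$ for $i \in \Rp$, i.e. $A_{\Rp\Cp} x_\Cp = b_\Rp$ (using $x_\Cz = 0$). Feasibility of the two vertices directly yields the remaining inequalities $A_{\Rz\Cp} x_\Cp \ge b_\Rz$ and $A_{\Rp\Cz}\T y_\Rp \le c_\Cz$. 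So all the stated relations hold except that I still must establish $|\Rp| = |\Cp|$ and that $A_{\Rp\Cp}$ is nonsingular.

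The main obstacle is precisely this square-and-nonsingular claim, which is where the vertex (basic feasible solution) property is essential. Since $\xstar$ is a vertex of a polyhedron in $\Re^n$ defined by the inequalities $Ax \ge b$ and $x \ge 0$, at $\xstar$ there must be $n$ linearly independent active constraints; the active bound constraints $x_j = 0$ for $j \in \Cz$ account for $|\Cz| = n - |\Cp|$ of them, so the remaining $|\Cp|$ independent active rows must come from $Ax \ge b$, forcing the $|\Cp|$ columns of $A_{\cdot\,\Cp}$ to be restricted to rows where $Ax = b$; in particular there is a set of $|\Cp|$ rows on which the corresponding submatrix of $A_{\cdot\,\Cp}$ has full column rank $|\Cp|$. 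Running the symmetric argument on the dual vertex $\ystar$ gives that $A_{\Rp\,\cdot}$ has full row rank $|\Rp|$. Combined with the complementarity equations $A_{\Rp\Cp} x_\Cp = b_\Rp$ with $x_\Cp > 0$ and $A_{\Rp\Cp}\T y_\Rp = c_\Cp$ with $y_\Rp > 0$, one shows $A_{\Rp\Cp}$ has both full row rank and full column rank, hence $|\Rp| = |\Cp|$ and $A_{\Rp\Cp}$ is nonsingular. (If one prefers to avoid the rank bookkeeping, an alternative is to reduce to standard form by adding slacks, quote the existence of a basic optimal solution there, and read off $\Rp$, $\Cp$ from which slacks are nonbasic — but the vertex argument above keeps everything in the original data.) Finally, optimality of the resulting $x$ and $y$ for $(PLP)$ and $(DLP)$ follows since they are feasible and satisfy complementary slackness, or equivalently because $c\T x = b\T y$.
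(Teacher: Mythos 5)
There is a genuine gap in your main line of argument: you define $\Cp$ and $\Rp$ as the \emph{supports} of independently chosen optimal vertices $\xstar$ and $\ystar$, but these supports need not satisfy $\abs{\Rp}=\abs{\Cp}$, let alone make $A_{\Rp\Cp}$ square and nonsingular. A small example where no choice of optimal vertices repairs this: $m=1$, $n=2$, $A=(1\;\;1)$, $b=1$, $c=0$. Every optimal primal vertex is $(1,0)$ or $(0,1)$, with support of size one, while the only dual feasible (hence optimal) point is $\ystar=0$, with empty support; so $\abs{\Cp}=1\ne 0=\abs{\Rp}$ for every admissible choice. The lemma nevertheless holds there with $\Rp=\{1\}$, $\Cp=\{1\}$ and $y_{\Rp}=0$, which shows that $\Rp$ and $\Cp$ must be allowed to strictly contain the supports (degenerate basic components with $x_j=0$ or $y_i=0$ are permitted by the statement, which only asserts $x_\Cp\ge0$, $y_\Rp\ge0$). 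Relatedly, your rank step does not close: the vertex property of $\xstar$ yields \emph{some} set $R'$ of active rows with $A_{R'\Cp}$ of full column rank, and the vertex property of $\ystar$ yields \emph{some} set $C'$ of active columns with $A_{\Rp C'}$ of full row rank, but $R'$ has no reason to coincide with the support of $\ystar$ nor $C'$ with the support of $\xstar$, so ``combined \dots\ $A_{\Rp\Cp}$ has both full row rank and full column rank'' does not follow.

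The repair is exactly the route you relegate to a parenthesis, which is also the paper's proof: add slacks to $(PLP)$ to obtain standard form, take a basic optimal solution, and let $\Cp$ be the indices of basic original variables and $\Rz$ the rows whose slacks are basic. Since the basis has $m$ columns, $\abs{\Cp}+\abs{\Rz}=m$, i.e.\ $\abs{\Cp}=\abs{\Rp}$, and after permuting rows the basis matrix is block lower triangular with blocks $A_{\Rp\Cp}$ and $-I$, so its nonsingularity is equivalent to that of $A_{\Rp\Cp}$. The primal and dual basic solutions of this basis then give all the displayed relations (with possible zero entries in $x_\Cp$ and $y_\Rp$), and optimality follows from feasibility plus complementary slackness as you note. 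So your proposal is correct only via its fallback alternative; the support-based construction that carries the main argument fails under degeneracy.
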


\begin{proof}
Proofs are typically given for standard form of a linear program,
e.g., \cite[Theorem 3.4]{MR3100219}. This can be achieved by adding
slack variables to $(PLP)$, from which the result follows.
\end{proof}

Our concern is to decompose the basis matrix by eliminating one row
and one column at a time. The following lemma gives the basis for such
an elimination of row $i$ and column $j$ for a given set of row indices
$R$ and column indices $C$.

\begin{lemma}\label{lem:invLP}
  Consider problems $(PLP)$ and $(DLP)$. Let $R$ denote a set of row
  indices of $A$ and let $C$ denote a set of column indices of
  $A$. Assume that
\begin{equation}\label{eq:deltaxy}
A_{RC} x_C = b_R, \quad A_{RC}^T y_R = c_C, \quad
A_{RC} \Deltait x_C = e_i, \quad A_{RC}^T \Deltait y_R = e_j,
\end{equation}
where $e_i$ and $e_j$ are the $i$th and $j$th unit vectors of
dimensions $\abs{R}$ and $\abs{C}$ respectively. Then,
\begin{subequations}
\begin{eqnarray}
c_C^T x_C & = & b_R^T y_R, \\
c_C^T \Deltait x_C & = & e_i^T y_R, \\
b_R^T \Deltait y_R & = & e_j^T x_C, \\
e_j^T \Deltait x_C & = & e_i^T \Deltait y_R.
\end{eqnarray}
\end{subequations}
In addition, assume that $e_j^T \Deltait x_C = e_i^T \Deltait y_R\ne
0$. Then there are unique scalars $s_i$ and $t_j$ such that
\begin{equation}\label{eqn-zericomp}
e_j^T (x_C+ s_i \Deltait x_C) = 0, \quad
e_i^T (y_R+ t_j \Deltait y_R) = 0,
\end{equation}
given by
\begin{equation}\label{eqn-step}
s_i=-\frac{e_j^T x_C}{e_j^T \Deltait x_C}, \quad t_j = -\frac{e_i^T
  y_R}{e_i^T \Deltait y_R}. 
\end{equation}
Furthermore,
\begin{equation}\label{eqn-costsame}
c_C^T ( x_C + s_i \Deltait x_C ) = b_R^T ( y_R + t_j \Deltait y_R ).
\end{equation}
\end{lemma}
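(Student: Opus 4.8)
The plan is to establish the four identities by direct substitution using the defining equations \eqref{eq:deltaxy} together with the symmetry of the bilinear form induced by $A_{RC}$, then to read off $s_i$ and $t_j$ from scalar equations, and finally to verify \eqref{eqn-costsame} by expanding both sides and matching cross terms.

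First I would prove the four equalities in the displayed system. For the first, write $c_C^T x_C = (A_{RC}^T y_R)^T x_C = y_R^T (A_{RC} x_C) = y_R^T b_R = b_R^T y_R$. The same manoeuvre gives the others: $c_C^T \Deltait x_C = y_R^T (A_{RC} \Deltait x_C) = y_R^T e_i = e_i^T y_R$; $b_R^T \Deltait y_R = x_C^T (A_{RC}^T \Deltait y_R) = x_C^T e_j = e_j^T x_C$; and $e_j^T \Deltait x_C = (A_{RC}^T \Deltait y_R)^T \Deltait x_C = \Deltait y_R^T (A_{RC} \Deltait x_C) = \Deltait y_R^T e_i = e_i^T \Deltait y_R$. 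Each of these is a one-line computation that simply uses one of the four equations in \eqref{eq:deltaxy} to substitute for the appropriate matrix–vector product.

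Next, under the hypothesis $e_j^T \Deltait x_C = e_i^T \Deltait y_R \ne 0$, equations \eqref{eqn-zericomp} are scalar linear equations in $s_i$ and $t_j$ respectively: $e_j^T x_C + s_i\, e_j^T \Deltait x_C = 0$ and $e_i^T y_R + t_j\, e_i^T \Deltait y_R = 0$. Since the coefficients $e_j^T \Deltait x_C$ and $e_i^T \Deltait y_R$ are nonzero, each equation has the unique solution stated in \eqref{eqn-step}.

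Finally, for \eqref{eqn-costsame} I would expand the two sides and reduce to a single scalar identity. Using the first two proved equalities, the left-hand side is $c_C^T x_C + s_i\, c_C^T \Deltait x_C = b_R^T y_R + s_i\, e_i^T y_R$; using the first and third, the right-hand side is $b_R^T y_R + t_j\, b_R^T \Deltait y_R = b_R^T y_R + t_j\, e_j^T x_C$. Hence it remains to check $s_i\, e_i^T y_R = t_j\, e_j^T x_C$. Substituting the formulas \eqref{eqn-step}, the left side equals $-(e_j^T x_C)(e_i^T y_R)/(e_j^T \Deltait x_C)$ and the right side equals $-(e_i^T y_R)(e_j^T x_C)/(e_i^T \Deltait y_R)$; these agree precisely because $e_j^T \Deltait x_C = e_i^T \Deltait y_R$, which is the fourth identity. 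There is no real obstacle in this argument; the only point requiring a moment's care is recognizing that the equality of the cross terms in the last step is exactly identity~(d), so that the nonzero denominators cancel correctly.
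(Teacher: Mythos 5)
Your proof is correct and follows essentially the same route as the paper: the four identities by substituting \eqref{eq:deltaxy} into the bilinear forms, the formulas \eqref{eqn-step} from the scalar equations, and \eqref{eqn-costsame} by matching the cross terms. The only (immaterial) difference is that you verify $s_i\,e_i^T y_R = t_j\,e_j^T x_C$ by substituting the explicit formulas for $s_i$ and $t_j$ and invoking identity~(d), whereas the paper routes the same cancellation through the quantity $\Deltait x_C^T A_{RC}^T \Deltait y_R$.
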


\begin{proof}
We obtain
\begin{eqnarray*}
c_C^T x_C & = & y_R^T A_{RC} x_C = y_R^T b_R = b_R^T y_R, \\
c_C^T \Deltait x_C & = & y_R^T A_{RC} \Deltait x_C = y_R^T e_i = e_i^T
y_R, \\
b_R^T \Deltait y_R & = & x_C^T A_{RC}^T \Deltait y_R = x_C^T e_j = e_j^T x_C, \\
e_j^T \Deltait x_C & = & \Deltait y_R^T A_{RC} \Deltait x_C =
\Deltait y_R^T e_i = e_i^T \Deltait y_R.
\end{eqnarray*}
To show the final results, if $e_j^T \Deltait x_C = e_i^T \Deltait
y_R\ne 0$, then the values of $s_i$ and $t_j$ given by
(\ref{eqn-step}) follow immediately. From these values of $s_i$ and
$t_j$, we obtain 
\begin{eqnarray*}
0 & = & e_j^T (x_C+ s_i \Deltait x_C) = \Deltait y_R^T A_{RC} 
(x_C+ s_i \Deltait x_C) = b_R^T \Deltait y_R + s_i \Deltait y_R^T
A_{RC} \Deltait x_C, \\
0 & = & e_i^T (y_R+ t_j \Deltait y_R) = \Deltait x_C^T A_{RC}^T 
(y_R+ t_j \Deltait y_R) = c_C^T \Deltait x_C + t_j \Deltait x_C^T
A_{RC}^T \Deltait y_R.
\end{eqnarray*}
A combination of these equations gives
\begin{equation}\label{eqn-costdiffsame}
t_j b_R^T \Deltait y_R = s_i c_C^T \Deltait x_C = 
- s_i t_j \Deltait x_C^T A_{RC}^T \Deltait y_R.
\end{equation}
A combination of $b_R^T y_R = c_C^T x_C$ and (\ref{eqn-costdiffsame})
gives (\ref{eqn-costsame}), as required.
\end{proof}

This result may now be used to reduce the dimension of the basis
matrix by one row and one column, while maintaining primal and dual
optimality to the reduced problem.

\begin{lemma}\label{lem-reducedimLP}
  Consider problems $(PLP)$ and $(DLP)$. Let $\Rk$ denote a set of row
  indices of $A$ and let $\Ck$ denote a set of column indices of $A$
  such that $\abs{\Rk}=\abs{\Ck}=k$, with $k\ge 2$. Assume that
  $A_{\Rk\Ck}$ is nonsingular, and assume that
\begin{displaymath}
A_{\Rk\Ck}\drop x_\Ck\drop  = b_\Ck\drop, 
\qquad
A_{\Rk\Ck}^T y_\Rk\drop = c_\Ck\drop, 
\end{displaymath}
where $x_\Ck\drop \ge 0$ and $y_\Rk\drop \ge 0$. Then, there is a row
index $i_k$, with $i_k\in\Rk$, and a column index $j_k$, with
$j_k\in\Ck$, such that $A_{\Rkm\Ckm}$ is nonsingular, where
$\Rkm=\Rk\backslash\{i_k\}$ and
$\Ckm=\Ck\backslash\{j_k\}$. Furthermore, it holds that
\begin{displaymath}
A_{\Rkm\Ckm} x_\Ckm\drop  = b_\Ckm\drop, 
\qquad
A_{\Rkm\Ckm}^T y_\Rkm\drop = c_\Ckm\drop, 
\end{displaymath}
for $x_\Ckm\drop\ge0$, $y_\Rkm\drop\ge0$.
\end{lemma}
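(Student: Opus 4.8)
\emph{Proof plan.}
The plan is to apply Lemma~\ref{lem:invLP} with $R=\Rk$ and $C=\Ck$. For a candidate pivot $(i,j)$ with $i\in\Rk$ and $j\in\Ck$, solve $A_{\Rk\Ck}\Deltait x_\Ck=e_i$ and $A_{\Rk\Ck}^T\Deltait y_\Rk=e_j$; the quantity $\gamma:=e_j^T\Deltait x_\Ck=e_i^T\Deltait y_\Rk$ appearing in Lemma~\ref{lem:invLP} equals $(A_{\Rk\Ck}^{-1})_{ji}$, and by the cofactor identity relating an entry of an inverse to the determinant of the complementary submatrix we have $\gamma\ne 0$ if and only if $A_{\Rkm\Ckm}$ is nonsingular, where $\Rkm=\Rk\backslash\{i\}$, $\Ckm=\Ck\backslash\{j\}$. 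Assuming $\gamma\ne 0$, put $x_\Ckm$ and $y_\Rkm$ equal to $x_\Ck+s_i\Deltait x_\Ck$ and $y_\Rk+t_j\Deltait y_\Rk$ restricted to $\Ckm$ and $\Rkm$, with $s_i,t_j$ as in \eqref{eqn-step}. By \eqref{eqn-zericomp} the $j$th component of $x_\Ck+s_i\Deltait x_\Ck$ and the $i$th component of $y_\Rk+t_j\Deltait y_\Rk$ vanish, so deleting row $i$ and column $j$ from the identities $A_{\Rk\Ck}(x_\Ck+s_i\Deltait x_\Ck)=b_\Rk+s_ie_i$ and $A_{\Rk\Ck}^T(y_\Rk+t_j\Deltait y_\Rk)=c_\Ck+t_je_j$ gives exactly $A_{\Rkm\Ckm}x_\Ckm=b_\Rkm$ and $A_{\Rkm\Ckm}^Ty_\Rkm=c_\Ckm$. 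Thus everything reduces to finding a pivot $(i,j)$ with $\gamma\ne 0$ for which both $x_\Ck+s_i\Deltait x_\Ck\ge 0$ and $y_\Rk+t_j\Deltait y_\Rk\ge 0$; the required $i_k,j_k$ are then this $i,j$.

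I would first dispose of the degenerate cases. If $(x_\Ck)_{j_0}=0$ for some $j_0$, take $j_k=j_0$; then $s_{i_k}=0$, so $x_\Ck$ is unchanged and only the dual update needs attention. The set $\{t:y_\Rk+t\,\Deltait y_\Rk\ge 0\}$, with $\Deltait y_\Rk=A_{\Rk\Ck}^{-T}e_{j_0}\ne 0$, is a nonempty closed interval containing $0$ with at least one finite endpoint; at that endpoint some component of $y_\Rk+t\,\Deltait y_\Rk$ vanishes with the corresponding component of $\Deltait y_\Rk$ nonzero, which selects $i_k$ (so $(A_{\Rk\Ck}^{-1})_{j_0 i_k}\ne 0$) and makes $y_\Rk+t_{j_0}\Deltait y_\Rk\ge 0$, the endpoint value coinciding with $t_{j_0}$ of \eqref{eqn-step}. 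One checks as above that the reduced systems then hold with nonnegative solutions. The case $(y_\Rk)_{i_0}=0$ is symmetric with the roles of the primal and dual interchanged.

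This leaves the principal case $x_\Ck>0$, $y_\Rk>0$. Here the primal update can always be made nonnegative by a ratio test: for any $i$, since $\Deltait x_\Ck=A_{\Rk\Ck}^{-1}e_i\ne 0$ and $x_\Ck>0$, the line $\{x_\Ck+s\,\Deltait x_\Ck:s\in\Re\}$ must meet the boundary of the nonnegative orthant, and the first such point identifies a column $j$ with $\gamma=(\Deltait x_\Ck)_j\ne 0$ and $x_\Ck+s_i\Deltait x_\Ck\ge 0$. The main obstacle — and the heart of the lemma — is that this choice need not keep the induced dual update $y_\Rk+t_j\Deltait y_\Rk$ nonnegative; one has to exhibit a pivot that passes the primal \emph{and} the dual ratio tests simultaneously. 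Two facts constrain the search and should drive the argument: the identity $s_i\,e_i^Ty_\Rk=t_j\,e_j^Tx_\Ck$ from \eqref{eqn-costdiffsame}, which (as $x_\Ck,y_\Rk>0$) forces $s_i$ and $t_j$ to have the same sign and so lets one treat pivots with $\gamma<0$ and with $\gamma>0$ separately; and the observation that, for a fixed sign of $\gamma$, ``primal-admissible'' means precisely that $j$ attains the minimum ratio along $\Deltait x_\Ck$ while ``dual-admissible'' means that $i$ attains the minimum ratio along $\Deltait y_\Rk$. The remaining task is to show that some $(i,j)$ satisfies both minimum-ratio conditions, equivalently that among the candidate submatrices $A_{\Rkm\Ckm}$ obtained by deleting one row and one column there is a nonsingular one whose primal and dual basic solutions are both nonnegative; I expect the cleanest route is an extremal argument (choosing the pivot minimizing the combined infeasibility of the two updates and showing it must vanish) or, alternatively, applying Lemma~\ref{lem:partitionLP} to the relaxation of $(PLP_k)$ obtained by deleting a suitably chosen constraint $i$ — whose dual is feasible through $y_\Rk$ restricted, and which, being bounded for an appropriate choice of $i$, has an optimal basic solution forced to keep all remaining $A$-constraints active since the degenerate point $x_\Ck$ is not a vertex of it.
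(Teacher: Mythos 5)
Your framework (the cofactor identity $e_j^T\Deltait x_\Ck=(A_{\Rk\Ck}^{-1})_{ji}\ne 0$ iff $A_{\Rkm\Ckm}$ is nonsingular, and the deletion of row $i$ and column $j$ from the updated identities) is correct, and your treatment of the degenerate cases $(x_\Ck)_{j_0}=0$ or $(y_\Rk)_{i_0}=0$ matches the paper's first two cases: fix the degenerate index, run a single ratio test on the other side, and take the binding index as the partner. The problem is that you stop exactly where the lemma's real content begins. In the principal case $x_\Ck>0$, $y_\Rk>0$ you correctly identify that one must find a pair $(i,j)$ passing the primal and dual ratio tests simultaneously, but you do not prove that such a pair exists; "I expect the cleanest route is an extremal argument or, alternatively, applying Lemma~\ref{lem:partitionLP} to a relaxation" is a plan, not a proof. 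Moreover, your second suggested route rests on a false premise: if you delete constraint $i$ from $(PLP_k)$, the restriction of $y_\Rk$ to $\Rkm$ satisfies $A_{\Rkm\Ck}^Ty_\Rkm = c_\Ck - y_i a_i^T$, which is not $\le c_\Ck$ in general when $y_i>0$, so the relaxed dual is not obviously feasible at that point.

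What the paper does in this case is an alternating ratio-test argument you did not find. Pick any $i$, run the primal ratio test to get $j$ and $s_i$ with $e_j^T(x_\Ck+s_i\Deltait x_\Ck)=0$; compute the dual direction for that $j$ and the value $t_j$ zeroing component $i$. If $y_\Rk+t_j\Deltait y_\Rk\not\ge 0$, shrink $t_j$ to the dual ratio-test value (it stays strictly positive since $y_\Rk>0$), let the new binding row replace $i$, recompute the associated $s_i$, and repeat. The identity \eqref{eqn-costsame} together with $s_i e_i^Ty_\Rk = t_j e_j^Tx_\Ck$ and positivity of $x_\Ck,y_\Rk$ shows each exchange strictly decreases the common objective value $c_\Ck^T(x_\Ck+s_i\Deltait x_\Ck)$ (the steps shrink monotonically toward zero in magnitude while keeping their sign), so no pair $(i,j)$ can recur and the process terminates at a pair passing both tests. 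Without this (or some other completed argument) for the nondegenerate case, your proof has a genuine gap.
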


\begin{proof}
We may apply Lemma~\ref{lem:invLP} for $R=\Rk$ and $C=\Ck$. The
quantities $x_\Ck$, $y_\Rk$, $\Deltait x_\Ck$ and $\Deltait y_\Rk$ are
well defined since $A_{\Rk\Ck}$ is nonsingular.

First, assume that $y_i=0$ for some $i\in\Rk$. Let $i_k=i$. Compute
$\Deltait x_\Ck$ as in Lemma~\ref{lem:invLP} for this $i$. If
$\Deltait x_\Ck\not\ge 0$, we may compute the most limiting positive
step for maintaining nonnegativity of $x_\Ck + s \Deltait x_\Ck$,
i.e.,
\begin{equation}\label{eqn-poss}
s=\min_{j\in\Ck:e_j^T \Deltait x_\Ck < 0}\frac{e_j^T x_\Ck}{-e_j^T
  \Deltait x_\Ck}.
\end{equation}
If $\Deltait x_\Ck\not\le 0$, we
may compute the most limiting negative step for maintaining
nonnegativity of $x_\Ck + s \Deltait x_\Ck$, i.e.,
\begin{equation}\label{eqn-negs}
s=\max_{j\in\Ck:e_j^T \Deltait x_\Ck > 0}\frac{e_j^T x_\Ck}{-e_j^T
  \Deltait x_\Ck}.
\end{equation}
Since $\Deltait x_\Ck\ne 0$, at least one of (\ref{eqn-poss}) and
(\ref{eqn-negs}) is well defined. Pick one which is well defined, let
$j_k$ be a minimizing index and let $s_i$ be the corresponding
$s$-value. If $\Rkm=\Rk\backslash \{i_k\}$ and $\Ckm=\Ck\backslash
\{j_k\}$, then $A_{\Rkm\Ckm}$ is nonsingular, since $\Deltait x_\Ck$ up
to a scalar is the unique vector in the nullspace of $A_{\Rkm\Ck}$,
and $e_{j_k}^T\Deltait x_\Ck \ne 0$ by (\ref{eqn-poss}) and
(\ref{eqn-negs}).

Now assume that $x_j=0$ for some $j\in\Ck$. This is totally analogous
to the case $y_i=0$. Let $j_k=j$. Compute
$\Deltait y_\Rk$ as in Lemma~\ref{lem:invLP} for this $j$. If
$\Deltait y_\Rk\not\ge 0$, we may compute the most limiting positive
step for maintaining nonnegativity of $y_\Rk + t \Deltait y_\Rk$,
i.e.,
\begin{equation}\label{eqn-post}
t=\min_{i\in\Rk:e_i^T \Deltait y_\Rk < 0}\frac{e_i^T y_\Rk}{-e_i^T
  \Deltait y_\Rk}.
\end{equation}
If $\Deltait y_\Rk\not\le 0$, we
may compute the most limiting negative step for maintaining
nonnegativity of $y_\Rk + t \Deltait y_\Rk$, i.e.,
\begin{equation}\label{eqn-negt}
t=\max_{i\in\Rk:e_i^T \Deltait y_\Rk > 0}\frac{e_i^T y_\Rk}{-e_i^T
  \Deltait y_\Rk}.
\end{equation}
Since $\Deltait y_\Rk\ne 0$, at least one of (\ref{eqn-post}) and
(\ref{eqn-negt}) is well defined. Pick one which is well defined, let
$i_k$ be a minimizing index and let $t_j$ be the corresponding
$t$-value. If $\Rkm=\Rk\backslash \{i_k\}$ and $\Ckm=\Ck\backslash
\{j_k\}$, then $A_{\Rkm\Ckm}$ is nonsingular, since $\Deltait y_\Rk$ up
to a scalar is the unique vector in the nullspace of $A_{\Rk\Ckm}^T$,
and $e_{j_k}^T\Deltait y_\Ck \ne 0$ by (\ref{eqn-post}) and
(\ref{eqn-negt}).

Finally, we consider the case when $x_\Ck>0$ and $y_\Rk>0$. Then, for
any $i$, $j$ pair with $i\in\Rk$ and $j\in\Ck$, Lemma~\ref{lem:invLP}
gives $c_\Ck^T \Deltait x_\Ck> 0$ since $y_\Rk>0$ and $b_\Rk^T
\Deltait y_\Rk> 0$ since $x_\Ck>0$.  We may now pick an $i\in\Rk$ and
compute $\Deltait x_\Ck$ as in~\eqref{eq:deltaxy}. We must have
$\Deltait x_\Ck\ne 0$, since $e_i\ne 0$. We may now compute the most
limiting step from (\ref{eqn-poss}) or (\ref{eqn-negs}), out of which
at least one has to be well defined. Assume first the former. Let
$s_i$ denote the step and let $j$ be an index for which the minimum is
attained, so that $e_j^T(x_\Ck+s_i \Deltait x_\Ck)= 0$. By computing
$\Deltait y_\Rk$ for this $j$, there is an associated positive $t_j$
such that $e_i^T(y_\Rk+t_j \Deltait y_\Rk)= 0$ by
Lemma~\ref{lem:invLP}. If $y_\Rk+t_j \Deltait y_\Rk \ge 0$, we are
done. Otherwise, we let $t_j$ be the maximum positive step such that
$y_\Rk+t_j \Deltait y_\Rk \ge 0$. By construction, this must give a
strict reduction of $t_j$, but $t_j$ will remain strictly positive
since $y_\Rk>0$. We now conversely find the associated step
$s_i$. This process may be repeated a finite number of times for $i$,
$j$ pairs until $x_\Ck+s_i \Deltait x_\Ck \ge 0$ with $e_j^T(x_\Ck+s_i
\Deltait x_\Ck)= 0$ and $y_\Rk+t_j \Deltait y_\Rk \ge 0$ with
$e_i^T(y_\Rk+t_j \Deltait y_\Rk) = 0$. Note that (\ref{eqn-costsame})
of Lemma~\ref{lem:invLP} implies that each step gives a strict
reduction in objective function value, since one of the $s_i$ or $t_j$
values is reduced. Let $i_k=i$ and $j_k=j$. If $\Rkm=\Rk\backslash
\{i_k\}$ and $\Ckm=\Ck\backslash \{j_k\}$, then $A_{\Rkm\Ckm}$ is
nonsingular, since $\Deltait x_\Ck$ up to a scalar is the unique
vector in the nullspace of $A_{\Rkm\Ck}$, and $e_{j_k}^T\Deltait x_\Ck
\ne 0$. If (\ref{eqn-negt}) is used instead of (\ref{eqn-post}), the
argument is analogous, but now $s_i$ and $t_j$ are negative,
increasing towards zero.
\end{proof}

The optimality conditions given by Lemma~\ref{lem:partitionLP} imply
the existence of a nonsingular submatrix $A_{\Rp\Cp}$. Recursive
application of Lemma~\ref{lem-reducedimLP} gives a decomposition of
this matrix into square nonsingular submatrices of dimensions
shrinking by one row and one column at a time, corresponding to
primal-dual optimal pairs of $(PLP_k)$ and $(DLP_k)$ respectively, for
$k=r,r-1,\dots,1$. By reversing this argument, there must exist a
sequence of $r$ pivots $(i_1,j_1)$, $(i_2,j_2)$, $\dots$, $(i_r,j_r)$,
such that at stage $k$, optimal solutions to $(PLP_k)$ and $(DLP_k)$
are created, and at stage $r$, optimal solutions to $(PLP)$ and
$(DLP)$ are found. This is summarized in the following theorem.

\begin{theorem}\label{thm-LP}
  Assume that problems $(PLP)$ and $(DLP)$ both have feasible
  solutions.  For the optimality conditions given by
  Lemma~\ref{lem:partitionLP}, let $r=\abs{\Rp}=\abs{\Cp}$. Then,
  $r\le\min\{m,n\}$ and there are pairs of row and column indices
  $(i_k,j_k)$, $k=1,\dots,r$, which generate sets of row indices
  $R_1=\{i_1\}$, $\Rkp=\Rk\cup\{i_{k+1}\}$, and sets of column indices
  $C_1=\{j_1\}$, $\Ckp=\Ck\cup\{j_{k+1}\}$, with
  $i_{k+1}\in\{1,\dots,m\}\backslash \Rk$ and
  $j_{k+1}\in\{1,\dots,n\}\backslash \Ck$, such that for each $k$,
  $A_{R\k\Ck}$ is nonsingular, and $x_\Ck$ and $y_\Rk$ computed from
\[
A_{\Rk\Ck} x_\Ck = b_\Rk, \quad A_{\Rk\Ck}^T y_\Rk = c_\Ck, 
\]
are optimal to $(PLP_k)$ and $(DLP_k)$ respectively. In addition,
$x_{C_r}$ and $y_{R_r}$ together with $x_j=0$ for
$j\in\{1,\dots,n\}\backslash C_r$ and $y_i=0$ for
$i\in\{1,\dots,m\}\backslash R_r$ are optimal to $(PLP)$ and $(DLP)$
respectively.
\end{theorem}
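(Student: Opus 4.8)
The plan is to assemble the theorem from the three lemmas that precede it, essentially by running Lemma~\ref{lem-reducedimLP} backwards. First I would invoke Lemma~\ref{lem:partitionLP} to obtain the partitioning $\Rp,\Rz,\Cp,\Cz$ with $|\Rp|=|\Cp|=:r$, the nonsingular submatrix $A_{\Rp\Cp}$, and primal-dual optimal solutions $x,y$ of $(PLP)$ and $(DLP)$ supported on $\Cp$ and $\Rp$. The bound $r\le\min\{m,n\}$ is immediate, since $\Rp$ is a subset of the $m$ row indices and $\Cp$ a subset of the $n$ column indices and the two sets have equal cardinality. Note that $x_\Cp\ge 0$ and $y_\Rp\ge 0$ by Lemma~\ref{lem:partitionLP}, so the hypotheses of Lemma~\ref{lem-reducedimLP} are met with $k=r$, $\Rk=\Rp$, $\Ck=\Cp$ (assuming $r\ge 2$; the cases $r\in\{0,1\}$ are handled separately below).

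Next I would apply Lemma~\ref{lem-reducedimLP} recursively. Starting from $(\Rp,\Cp)$ of size $r$, the lemma produces a row index $i_r\in\Rp$ and a column index $j_r\in\Cp$ so that, writing $\Rkm[r]=\Rp\backslash\{i_r\}$ and $\Ckm[r]=\Cp\backslash\{j_r\}$, the submatrix $A_{\Rkm[r]\Ckm[r]}$ is nonsingular and the associated basic solutions $x_{\Ckm[r]}$, $y_{\Rkm[r]}$ are nonnegative. These are exactly the hypotheses of Lemma~\ref{lem-reducedimLP} again, now at size $r-1$, so I repeat, extracting indices $i_{r-1},j_{r-1}$, and so on down to size $1$. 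This yields a descending chain of index-set pairs of sizes $r,r-1,\dots,1$, each giving a nonsingular submatrix with nonnegative primal and dual basic solutions. Relabelling by reading the chain from the bottom up, set $R_k$ to be the size-$k$ row-index set and $C_k$ the size-$k$ column-index set, so $R_1=\{i_1\}$, $\Rkp=\Rk\cup\{i_{k+1}\}$, $C_1=\{j_1\}$, $\Ckp=\Ck\cup\{j_{k+1}\}$, with $i_{k+1}\notin\Rk$ and $j_{k+1}\notin\Ck$ by construction; moreover $R_r=\Rp$, $C_r=\Cp$.

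It then remains to check that at each level $k$ the nonnegative basic solution $x_\Ck$, $y_\Rk$ of $A_{\Rk\Ck}x_\Ck=b_\Rk$, $A_{\Rk\Ck}^Ty_\Rk=c_\Ck$ is actually optimal to $(PLP_k)$ and $(DLP_k)$. This is a local optimality argument: $x_\Ck$ and $y_\Rk$ are a primal-dual feasible pair for $(PLP_k)$ and $(DLP_k)$ (feasibility is just the nonnegativity together with the equality constraints, which hold with equality hence certainly satisfy the inequality $A_{\Rk\Ck}x_\Ck\ge b_\Rk$ etc.), and by weak duality $c_\Ck^Tx_\Ck=b_\Rk^Ty_\Rk$ — this identity is furnished by part (a) of Lemma~\ref{lem:invLP} applied with $R=\Rk$, $C=\Ck$ — so complementary slackness gives optimality. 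Finally, at $k=r$ we have $R_r=\Rp$, $C_r=\Cp$, and padding with zeros outside these sets is precisely the basic solution of Lemma~\ref{lem:partitionLP}, whose feasibility conditions $A_{\Rz\Cp}x_\Cp\ge b_\Rz$ and $A_{\Rp\Cz}^Ty_\Rp\le c_\Cz$ certify optimality for the full $(PLP)$, $(DLP)$. The degenerate cases $r=0$ (then $b\le 0$ and $c\ge 0$, so $x=0$, $y=0$ are optimal with an empty pivoting sequence) and $r=1$ (a single pivot, with Lemma~\ref{lem-reducedimLP} not needed) are disposed of directly.

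The main obstacle is not any single hard estimate but the bookkeeping: one must be careful that the recursion of Lemma~\ref{lem-reducedimLP} can be iterated — i.e., that its output always satisfies its own input hypotheses (nonsingularity plus nonnegativity of both basic solutions), which it does by inspection of the lemma's conclusion — and that reversing the order of the extracted index pairs genuinely yields an \emph{increasing} sequence of pivots with the stated nesting and with $A_{\Rk\Ck}$ nonsingular at every stage. The optimality of the intermediate $(PLP_k)$, $(DLP_k)$ deserves an explicit sentence, as it is the one place where Lemma~\ref{lem:invLP}(a) rather than Lemma~\ref{lem-reducedimLP} is invoked.
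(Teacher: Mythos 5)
Your proposal is correct and follows essentially the same route as the paper: obtain the optimal basis from Lemma~\ref{lem:partitionLP}, peel off one row and one column at a time via repeated application of Lemma~\ref{lem-reducedimLP}, and reverse the resulting chain; the extra detail you supply (optimality of the intermediate pairs via primal-dual feasibility plus the equal-objective identity of Lemma~\ref{lem:invLP}, and the separate treatment of $r\in\{0,1\}$) is consistent with, and somewhat more explicit than, the paper's own argument.
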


\begin{proof}
  For $r=0$ or $r=1$, the result is immediate from the optimality
  conditions of Lemma~\ref{lem:partitionLP}. For $r\ge 2$, let
  $R_r=\Rp$ and $C_r=\Cp$, and repeatedly apply
  Lemma~\ref{lem-reducedimLP} for $k=r,r-1,r-2,...,2$. This gives an
  ordering of the indices of $\Rp$ and $\Cp$ as $i_r$, $i_{r-1}$,
  \dots, $i_1$ and $j_r$, $j_{r-1}$, \dots, $j_1$, such that the
  corresponding $x_\Ck$ and $y_\Rk$ are optimal to $(PLP_k)$ and
  $(DLP_k)$ respectively for $k=1,\dots,r$. In addition, $x_{C_r}$ and
  $y_{R_r}$ are optimal to $(PLP)$ and $(DLP)$ respectively. If the 
  ordering is reversed, so that $k=1,\dots,r$, the result follows.
\end{proof}

We note that Theorem~\ref{thm-LP} shows the existence of a sequence of
pivots that would create the optimal basis in $r$ steps, where
$r\le\min\{m,n\}$. We refer to such a sequence of pivots as a
\emph{short} sequence of pivots. This, however, does not constitute an
algorithm based on global information. We have no global information
on how to create the sequence of pivots. There is a straightforward
method given by enumerating all potential sequences of pivots that
generate primal-dual optimal pairs of linear programs of increasing
dimension.  However, we have not been able to give any useful bound on
the potential number of bases.

We also note that the short sequence of pivots does not explicitly
take into account objective function value, as the simplex method
does.  Therefore, we cannot ensure generating a short sequence of
pivots by making use of pivots associated with primal simplex only or
dual simplex only.

\section{Decomposing a matrix by convex combinations}

We also consider the related problem of decomposing a given $m\times
n$ matrix $M$ by convex combination, formulated as the following
primal-dual pair of linear programs
\begin{displaymath}
(P)\quad
  \begin{array}{ll}
     \minimize{u\in\Re^n,\alpha\in\Re}    & \alpha \\
     \subject & Mu + e \alpha \ge 0, \\
              & e\T u = 1, \\
              & u\ge 0,
  \end{array}
\qquad\quad
(D)\quad
  \begin{array}{ll}
     \maximize{v\in\Re^m,\beta\in\Re} & \beta \\
     \subject & M\T v +e \beta \le 0, \\
         & e\T v = 1, \\
         & v \ge 0.
  \end{array}
\end{displaymath}
Here, and throughout, $e$ denotes the vector of ones of the
appropriate dimension.

Problems $(P)$ and $(D)$ have a joint optimal value $\gamma$ by strong
duality for linear programming. The difference to the general linear
program is that $u$ and $v$ are defined on the unit simplex, and $(P)$
and $(D)$ are always feasible. This will enable a slightly stronger
result in which monotonicity in the objective function value may be
enforced.

We state the analogous results to the general linear programming case,
and point out the differences.

\begin{lemma}[Existence of optimal basic feasible 
  solution]\label{lem:partitionM} 
  For a given matrix $M$, there exists a number $\gamma$ and a
  partitioning of the row indices of $M$ into two sets $\Rp$ and   $\Rz$, and a partitioning of the column indices of $M$ into two sets
  $\Cp$ and $\Cz$, so that $\abs{\Rp}=\abs{\Cp}$, and associated with
  the resulting matrix
\[
\mtx{ccc}{ M_{\Rp \Cp} & M_{\Rp \Cz} & e \\
          M_{\Rz \Cp} & M_{\Rz \Cz} & e \\
             e^T     & e^T       & 0},
 \]
 the submatrix
\[
\mtx{cc}{ M_{\Rp \Cp} & e \\
             e^T     & 0}
 \]
is nonsingular, and there are vectors $u$ and $v$ for which
\begin{dbleqnarray*}
\mtx{cc}{ M_{\Rp\Cp} & e \\ e^T & 0 }
\mtx{c}{ u_{\Cp}\drop \\ \gamma } & = & \mtx{c}{0 \\ 1}, &
\mtx{cc}{ M_{\Rp\Cp}^T & e \\ e^T & 0 }
\mtx{c}{ v_{\Rp}\drop \\ \gamma } & = & \mtx{c}{0 \\ 1}, \\
\mtx{cc}{ M_{\Rz\Cp} & e }
\mtx{c}{ u_{\Cp}\drop \\ \gamma } & \ge & 0, &
\mtx{cc}{ M_{\Rp\Cz}^T & e }
\mtx{c}{ v_{\Rp}\drop \\ \gamma } & \le & 0, \\
       u_\Cp\drop & \geq & 0, &        v_\Rp\drop & \geq & 0, \\
       u_\Cz\drop & = & 0, &        v_\Rz\drop & = & 0,
\end{dbleqnarray*}
hold. The vectors $(u,\gamma)$ and $(v,\gamma)$ are optimal solutions
to $(P)$ and $(D)$ respectively.
\end{lemma}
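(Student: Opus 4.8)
The plan is to mirror the proof of Lemma~\ref{lem:partitionLP}: bring $(P)$ into standard form by adding slack variables, invoke the standard theorem on the existence of an optimal basic feasible solution, and then read the block structure off the basis. Introducing a slack vector $w\ge 0$ turns $(P)$ into the linear program of minimizing $\alpha$ subject to $Mu+e\alpha-w=0$, $e^Tu=1$, $u\ge 0$, $w\ge 0$, with $\alpha$ a free variable. This program is feasible and bounded (the unit simplex is nonempty and $\alpha$ may be taken large), its constraint matrix $\mtx{ccc}{M & -I & e\\ e^T & 0 & 0}$ has full row rank $m+1$, and its linear programming dual is $(D)$; by strong duality its optimal value is the common optimal value $\gamma$ of $(P)$ and $(D)$. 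The one point that goes beyond the situation of Lemma~\ref{lem:partitionLP}, and the step I expect to require the most care, is that the basis must be chosen so as to contain the column of the free variable $\alpha$ and the row of the equality constraint $e^Tu=1$: the latter is automatic, since that row is an equality constraint, while the former uses the standard strengthening of the existence theorem that at an optimal basic feasible solution the columns of the free variables may be taken to be basic.

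Granting this, write the basic $u$-columns as $u_{C_+}$ with $C_+\subseteq\{1,\dots,n\}$ and the basic slack columns as $w_{R_0}$, where $R_0\subseteq\{1,\dots,m\}$ indexes the rows of $Mu+e\alpha\ge 0$ that are inactive at the basic solution; put $R_+=\{1,\dots,m\}\setminus R_0$ and $C_0=\{1,\dots,n\}\setminus C_+$. Since there are $m+1$ basic columns, $|C_+|+|R_0|+1=m+1$, hence $|R_+|=|C_+|$. Ordering the rows of the basis as $R_+$, then $R_0$, then the row $e^Tu=1$, and the columns as $u_{C_+}$, then $\alpha$, then $w_{R_0}$, displays the basis matrix $B$ as
\[
\mtx{ccc}{M_{R_+ C_+} & e & 0\\ M_{R_0 C_+} & e & -I\\ e^T & 0 & 0},
\]
whose nonsingularity, by expansion along the slack columns, is equivalent to that of $\mtx{cc}{M_{R_+ C_+} & e\\ e^T & 0}$. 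The basic feasible solution has $u_{C_0}=0$ and $w_{R_+}=0$, and the block rows of the basis equation give $M_{R_+C_+}u_{C_+}+e\gamma=0$, $e^Tu_{C_+}=1$, and $M_{R_0 C_+}u_{C_+}+e\gamma=w_{R_0}\ge 0$, together with $u_{C_+}\ge 0$. With $\alpha$ equal to its optimal value $\gamma$, this is precisely the primal half of the asserted system.

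The dual half comes from the optimality (simplex) multipliers $v\in\Re^m$ (for the rows $Mu+e\alpha-w=0$) and $\beta\in\Re$ (for the row $e^Tu=1$), which are characterized by $B^T(v,\beta)=c_B$, where $c_B$ has a $1$ in the position of $\alpha$ and zeros elsewhere. Reading off its block rows gives $v_{R_0}=0$ and $\mtx{cc}{M_{R_+C_+}^T & e\\ e^T & 0}\mtx{c}{v_{R_+}\\ \beta}=\mtx{c}{0\\ 1}$, while $\beta=\gamma$ is the optimal value of $(D)$; nonnegativity of the reduced cost of each nonbasic slack column gives $v_{R_+}\ge 0$, and nonnegativity of the reduced cost of each nonbasic $u$-column gives $M_{R_+C_0}^Tv_{R_+}+e\gamma\le 0$. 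Thus $(u,\gamma)$ and $(v,\gamma)$ are feasible for $(P)$ and $(D)$ respectively with common objective value $\gamma$, hence optimal, which completes the proof.
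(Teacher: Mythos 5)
Your proof is correct and follows essentially the same route as the paper, whose own argument is the one-line remark that the lemma is analogous to Lemma~\ref{lem:partitionLP} (add slacks, pass to standard form, invoke the standard existence theorem for an optimal basic solution), the free variables being the only difference. You fill in the details the paper omits and correctly isolate the one genuinely new point---that the optimal basis can be chosen to contain the column of the free variable $\alpha$, so that the bordered submatrix is nonsingular---which is precisely the difference the paper's proof alludes to.
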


\begin{proof}
  This is analogous to Lemma~\ref{lem:partitionLP}. The only
  difference is that $\alpha$ and $\beta$ are free variables.
\end{proof}

\begin{lemma}\label{lem:invM}
  Consider problems $(P)$ and $(D)$. Let $R$ denote a set of row
  indices of $M$ and let $C$ denote a set of column indices of
  $M$. Assume that
\begin{dbleqnarray*}
\mtx{cc}{ M_{RC} & e \\ e^T & 0 }
\mtx{c}{ u_{C}\drop \\ \alpha } & = & \mtx{c}{0 \\ 1}, &
\mtx{cc}{ M_{RC}^T & e \\ e^T & 0 }
\mtx{c}{ v_{R}\drop \\ \beta } & = & \mtx{c}{0 \\ 1}, \\
\mtx{cc}{ M_{RC} & e \\ e^T & 0 }
\mtx{c}{ \Deltait u_{C}\drop \\ \Deltait \alpha } & = & \mtx{c}{e_i \\ 0}, &
\mtx{cc}{ M_{RC}^T & e \\ e^T & 0 }
\mtx{c}{ \Deltait v_{R}\drop \\ \Deltait \beta } & = & \mtx{c}{e_j \\ 0}.
\end{dbleqnarray*}
Then,
\begin{subequations}
\begin{eqnarray}
\alpha & = & \beta, \\
        \Deltait \alpha & = & e_i^T v_R, \\
\Deltait \beta & = & e_j^T u_C, \\
e_j^T \Deltait u_C & = & e_i^T \Deltait v_R.
\end{eqnarray}
\end{subequations}
In addition, assume that $e_j^T \Deltait u_C = e_i^T \Deltait v_R\ne
0$. Then there are unique scalars $s_i$ and $t_j$ such that
\[
e_j^T (u_C+ s_i \Deltait u_C) = 0, \quad
e_i^T (v_R+ t_j \Deltait v_R) = 0,
\]
given by
\[
s_i=-\frac{e_j^T u_C}{e_j^T \Deltait u_C}, \quad t_j = -\frac{e_i^T
  v_R}{e_i^T \Deltait v_R}. 
\]
Furthermore,
\[
\alpha + s_i \Deltait \alpha = \beta + t_j \Deltait \beta.
\]
\end{lemma}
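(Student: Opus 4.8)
The plan is to mimic the proof of Lemma~\ref{lem:invLP} line for line, with the bordered matrix $B\defined\mtx{cc}{M_{RC} & e \\ e^T & 0}$ playing the role of $A_{RC}$ (and $B^T=\mtx{cc}{M_{RC}^T & e \\ e^T & 0}$ that of $A_{RC}^T$), the basic solutions replaced by $\mtx{c}{u_C \\ \alpha}$ and $\mtx{c}{v_R \\ \beta}$, and the right-hand sides $b_R$, $c_C$ replaced by $\mtx{c}{0 \\ 1}$. Each of the four scalar identities comes from a single cross-multiplication of two of the given vector equations; the structural feature that keeps this clean is that the $(2,2)$ entry of $B$ is zero, so the last row/column contributes nothing to the inner products in question.

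First I would derive $\alpha=\beta$ by evaluating $\mtx{c}{v_R \\ \beta}^T B \mtx{c}{u_C \\ \alpha}$ two ways: grouping to the right gives $\mtx{c}{v_R \\ \beta}^T\mtx{c}{0 \\ 1}=\beta$, while grouping to the left, using $B^T\mtx{c}{v_R \\ \beta}=\mtx{c}{0 \\ 1}$, gives $\mtx{c}{0 \\ 1}^T\mtx{c}{u_C \\ \alpha}=\alpha$. The same manoeuvre applied to $\mtx{c}{v_R \\ \beta}^T B \mtx{c}{\Deltait u_C \\ \Deltait\alpha}$ yields $\Deltait\alpha=e_i^T v_R$; applied to $\mtx{c}{u_C \\ \alpha}^T B^T \mtx{c}{\Deltait v_R \\ \Deltait\beta}$ it yields $\Deltait\beta=e_j^T u_C$; and applied to $\mtx{c}{\Deltait v_R \\ \Deltait\beta}^T B \mtx{c}{\Deltait u_C \\ \Deltait\alpha}$ it yields $e_j^T\Deltait u_C=e_i^T\Deltait v_R$. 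In each case the zero $(2,2)$ block kills the term that would otherwise carry $\Deltait\alpha$ or $\Deltait\beta$ on one side, leaving exactly the stated equality.

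For the remainder, suppose $e_j^T\Deltait u_C=e_i^T\Deltait v_R\ne0$. Then $s\mapsto e_j^T(u_C+s\Deltait u_C)$ is affine with nonzero slope, hence has the unique zero $s_i$ displayed, and symmetrically for $t_j$. To get $\alpha+s_i\Deltait\alpha=\beta+t_j\Deltait\beta$, I would note $B\mtx{c}{u_C+s_i\Deltait u_C \\ \alpha+s_i\Deltait\alpha}=\mtx{c}{0 \\ 1}+s_i\mtx{c}{e_i \\ 0}=\mtx{c}{s_i e_i \\ 1}$, and expand $0=e_j^T(u_C+s_i\Deltait u_C)=\mtx{c}{\Deltait v_R \\ \Deltait\beta}^T B\mtx{c}{u_C+s_i\Deltait u_C \\ \alpha+s_i\Deltait\alpha}=s_i\,e_i^T\Deltait v_R+\Deltait\beta$, using $B^T\mtx{c}{\Deltait v_R \\ \Deltait\beta}=\mtx{c}{e_j \\ 0}$; hence $\Deltait\beta=-s_i\,e_i^T\Deltait v_R$. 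The mirror-image computation from $0=e_i^T(v_R+t_j\Deltait v_R)$, together with $B^T\mtx{c}{v_R+t_j\Deltait v_R \\ \beta+t_j\Deltait\beta}=\mtx{c}{t_j e_j \\ 1}$, gives $\Deltait\alpha=-t_j\,e_j^T\Deltait u_C$. Multiplying the first by $t_j$ and the second by $s_i$ and invoking $e_j^T\Deltait u_C=e_i^T\Deltait v_R$ shows $t_j\Deltait\beta=s_i\Deltait\alpha$, and adding $\alpha=\beta$ closes the argument.

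I do not anticipate a real obstacle: this is the bordered-system twin of Lemma~\ref{lem:invLP} and the algebra is parallel throughout. The only point needing care is the $2\times2$ block bookkeeping — verifying that each inner product pairs a unit-vector right-hand side against the correct block of the other vector, and that the zero pivot genuinely drops the spurious term — together with the (helpful) observation that here $\alpha$ and $\beta$ are free variables, so, unlike the corresponding reduction step in the LP case, no sign or feasibility conditions enter this statement.
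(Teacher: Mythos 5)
Your proposal is correct and is exactly the argument the paper intends: its proof of this lemma is the one line ``analogous to Lemma~\ref{lem:invLP},'' and you have carried out precisely that analogy, pairing the bordered system $B=\mtx{cc}{M_{RC} & e \\ e^T & 0}$ against the four given equations via the same inner-product cross-multiplications. All the block bookkeeping checks out, including the final identity obtained from $\Deltait\beta=-s_i\,e_i^T\Deltait v_R$ and $\Deltait\alpha=-t_j\,e_j^T\Deltait u_C$.
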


\begin{proof}
This is analogous to Lemma~\ref{lem:invLP}.
\end{proof}

As for the general linear programming case, associated with $(P)$ and
$(D)$ we will consider the linear programs
\begin{displaymath}
(P_k)\quad
  \begin{array}{ll}
     \minimize{u_\Ck\drop\in\Re^k,\alpha_k\in\Re}    & \alpha_k \\
     \subject & M_{\Rk\Ck}\drop u_\Ck\drop + e \alpha_k \ge 0, \\
              & e\T u_\Ck\drop = 1, \\
              & u_\Ck\drop\ge 0,
  \end{array}
\qquad\quad
(D_k)\quad
  \begin{array}{ll}
     \maximize{v_\Rk\drop\in\Re^k,\beta_k\in\Re} & \beta_k \\
     \subject & M_{\Rk\Ck}^T v_\Ck\drop +e \beta_k \le 0, \\
         & e\T v_\Ck\drop = 1, \\
         & v_\Ck\drop \ge 0.
  \end{array}
\end{displaymath}
where $\Rk$ denotes a set of row indices of $A$ and $\Ck$ denotes a
set of column indices of $M$ such that $\abs{\Rk}=\abs{\Ck}=k$.

\begin{lemma}\label{lem-reducegamma}
Let $M$ be an $m\times n$ matrix. Let $\Rk$ denote a set of row
indices of $M$ and let $\Ck$ denote a set of column indices of $M$
such that $\abs{\Rk}=\abs{\Ck}=k$, with $k\ge 2$. Assume that
\[
\mtx{cc} {M_{\Rk\Ck}\drop & e \\ e^T & 0}
\] 
is nonsingular, and assume that
\begin{displaymath}
 \mtx{cc} {M_{\Rk\Ck}\drop & e \\ e^T & 0} \mtx{c}{u_\Ck\drop \\ \gamma_k}
 = \mtx{c}{0 \\ 1}, 
\qquad
 \mtx{cc} {M_{\Rk\Ck}^T & e \\ e^T & 0} \mtx{c}{v_\Rk\drop \\
   \gamma_k} = \mtx{c}{0 \\ 1}, 
\end{displaymath}
where $u_\Ck\drop \geq 0$ and $v_\Rk\drop \geq 0$.

Then, there is a row index $i_k^{(1)}$, $i_k^{(1)}\in\Rk$, and a
column index $j_k^{(1)}$, $j_k^{(1)}\in\Ck$, such that
if $R_{k-1}^{(1)}=\Rk\backslash\{i_k^{(1)}\}$ and
$\Ckm=\Ck\backslash\{j_k^{(1)}\}$, then
\[
\mtx{cc} { M_{\Rkm\Ckm}^{(1)} & e \\ e^T & 0}
\] 
is nonsingular. Furthermore, it holds that
\[
 \mtx{cc} { M_{\Rkm\Ckm}^{(1)} & e \\ e^T & 0} \mtx{c}{u_\Ckm^{(1)} \\
   \gamma_{k-1}^{(1)}}  =  \mtx{c}{0 \\ 1}, 
\quad 
 \mtx{cc} { (M_{\Rkm\Ckm}^{(1)})^T & e \\ e^T & 0} \mtx{c}{v_\Rkm^{(1)} \\
   \gamma_{k-1}^{(1)}} = 
 \mtx{c}{0 \\ 1},
\]
for $u_\Ckm^{(1)}\ge0$, $v_\Rkm^{(1)}\ge0$  and $\gamma_{k-1}^{(1)} \leq \gamma_k^{(1)}$.

In addition, there is a row index $i_k^{(2)}$, $i_k^{(2)}\in\Rk$, and a column
index $j_k^{(2)}$, $j_k^{(2)}\in\Ck$, with $(i_k^{(2)},j_k^{(2)})\ne
(i_k^{(1)},j_k^{(1)})$, such that
if $R_{k-1}^{(2)}=\Rk\backslash\{i_k^{(2)}\}$ and
$\Ckm=\Ck\backslash\{j_k^{(2)}\}$, then
\[
\mtx{cc} { M_{\Rkm\Ckm}^{(2)} & e \\ e^T & 0}
\] 
is nonsingular. Furthermore, it holds that
\[
 \mtx{cc} { M_{\Rkm\Ckm}^{(2)} & e \\ e^T & 0} \mtx{c}{u_\Ckm^{(2)} \\
   \gamma_{k-1}^{(2)}}  =  \mtx{c}{0 \\ 1}, 
\quad 
 \mtx{cc} { (M_{\Rkm\Ckm}^{(2)})^T & e \\ e^T & 0} \mtx{c}{v_\Rkm^{(2)} \\
   \gamma_{k-1}^{(2)}} = 
 \mtx{c}{0 \\ 1},
\]
for $u_\Ckm^{(2)}\ge0$, $v_\Rkm^{(2)}\ge0$  and $\gamma_{k-1}^{(2)}
\geq \gamma_k^{(2)}$.
\end{lemma}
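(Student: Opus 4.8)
The plan is to mirror the proof of Lemma~\ref{lem-reducedimLP}, but to exploit the extra structure of $(P_k)$ and $(D_k)$: the variables $\alpha_k$, $\beta_k$ are free, and strong duality forces the common optimal value $\gamma_k$. The key observation is that in Lemma~\ref{lem-reducedimLP} the objective-function value was shown (via~\eqref{eqn-costsame}) to strictly decrease at each reduction step, whenever the $x_\Ck>0$, $y_\Rk>0$ case occurred; here, since $\alpha_k$ and $\beta_k$ are always basic (the bordered matrix is nonsingular), the analogue of that argument yields a step that changes $\gamma$ in a \emph{controlled direction}. The two ``directions'' in which one can pivot out a row--column pair correspond to the two signs of the step: taking $s_i$ with the sign that makes $\alpha$ decrease gives a reduction in $\gamma$, and taking $s_i$ with the opposite sign gives an increase. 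This is the source of the two claimed pivots $(i_k^{(1)},j_k^{(1)})$ and $(i_k^{(2)},j_k^{(2)})$, and of the inequalities $\gamma_{k-1}^{(1)}\le\gamma_k^{(1)}$ and $\gamma_{k-1}^{(2)}\ge\gamma_k^{(2)}$.

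**Main steps.** First I would apply Lemma~\ref{lem:invM} with $R=\Rk$, $C=\Ck$, and a chosen index $i\in\Rk$ (respectively $j\in\Ck$), obtaining the direction vectors $(\Deltait u_\Ck,\Deltait\alpha)$ and $(\Deltait v_\Rk,\Deltait\beta)$. Since $e_i\ne 0$, the direction $(\Deltait u_\Ck,\Deltait\alpha)$ is nonzero, so the same nullspace argument as in Lemma~\ref{lem-reducedimLP} shows that whichever index $j_k$ attains the ratio test, $\begin{psmallmatrix} M_{\Rkm\Ckm}& e\\ e^T&0\end{psmallmatrix}$ remains nonsingular. The first part (the $\gamma$-decreasing pivot) is obtained by choosing the \emph{positive} ratio-test step analogous to~\eqref{eqn-poss}/\eqref{eqn-post}, running exactly the finite alternating $s_i\leftrightarrow t_j$ refinement already spelled out at the end of the proof of Lemma~\ref{lem-reducedimLP}, and observing that the relation $\alpha+s_i\Deltait\alpha=\beta+t_j\Deltait\beta$ from Lemma~\ref{lem:invM} forces the new common value $\gamma_{k-1}^{(1)}$ to satisfy $\gamma_{k-1}^{(1)}\le\gamma_k$. (One must check a sign: that the ratio-test step chosen has $\Deltait\alpha$ of the appropriate sign, or else that $\Deltait\alpha=0$, in which case $\gamma$ is unchanged and the weak inequality still holds.) The second part repeats the construction with the \emph{negative} ratio-test step~\eqref{eqn-negs}/\eqref{eqn-negt}; the same reasoning then gives $\gamma_{k-1}^{(2)}\ge\gamma_k$. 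Finally I would argue $(i_k^{(2)},j_k^{(2)})\ne(i_k^{(1)},j_k^{(1)})$: because $\gamma_k$ is fixed and the reduced bordered system has a unique solution for any fixed nonsingular submatrix, if the two pivots coincided the two reduced problems would be identical, forcing $\gamma_{k-1}^{(1)}=\gamma_{k-1}^{(2)}$ and hence both equal to $\gamma_k$; but then the step taken was zero in both directions, contradicting that a strict ratio-test step was available (the degenerate subcase $u_j=0$ or $v_i=0$ has to be handled separately, as in Lemma~\ref{lem-reducedimLP}, but there the freedom in choosing which of the two sign-branches to run still yields two distinct pivots, or one verifies directly that both signs are available).

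**The main obstacle.** The delicate point is making the sign bookkeeping rigorous: showing that for a suitable choice of the pivot-out index $i$ (or $j$) at least one of the two sign-branches of the ratio test is well defined \emph{and} that following it moves $\gamma$ in the asserted direction, \emph{and} that one can simultaneously arrange the two branches to terminate at genuinely different pivots. In Lemma~\ref{lem-reducedimLP} only one direction was needed and the degenerate cases ($x_j=0$ or $y_i=0$) were disposed of by an ``analogous'' remark; here both directions are needed, so the degenerate cases must be examined more carefully to confirm that the freedom to pick a sign-branch really does produce two distinct $(i,j)$ pairs with the two $\gamma$-monotonicity inequalities. I expect the cleanest route is: always start from the non-degenerate interior case $u_\Ck>0$, $v_\Rk>0$ (which, if it fails, is itself the source of a ``free'' pivot as in Lemma~\ref{lem-reducedimLP}), pick any $i\in\Rk$, and note that both~\eqref{eqn-poss}-type and~\eqref{eqn-negs}-type steps are then available because $\Deltait u_\Ck$ has both positive and negative entries (its entries sum to a value determined by $e^T\Deltait u_\Ck$ from the border row, which pins down enough sign information); each choice, refined by the alternating procedure, yields a distinct pivot and the corresponding one-sided bound on $\gamma_{k-1}$.
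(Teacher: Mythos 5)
Your proposal is correct and takes essentially the same route as the paper: the paper's own (very terse) proof likewise reduces everything to the machinery of Lemma~\ref{lem:invM} and Lemma~\ref{lem-reducedimLP}, and its only substantive new ingredient is exactly the observation you arrive at in your final paragraph, namely that $\Deltait u_{C_k}\ne 0$ (since $e\,\Deltait\alpha_k=e_i$ is unsolvable for $k\ge 2$) together with the border equation $e^T\Deltait u_{C_k}=0$ forces $\Deltait u_{C_k}$ to have components of both signs, so that both the positive and the negative ratio tests are well defined and yield the two pivots with $\Deltait\gamma_k$ of opposite signs. One minor imprecision: your distinctness argument via the $\gamma$-values is shakier than needed (equality of the $\gamma$'s does not force a zero step, since $\Deltait\alpha_k$ could vanish); distinctness follows more directly because the two ratio tests select indices $j$ with $e_j^T\Deltait u_{C_k}$ of opposite signs.
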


\begin{proof}
  The difference compared to Lemma~\ref{lem-reducedimLP} is that
  $\alpha_k$ and $\beta_k$ are free variables. Hence, there is no
  nonnegativity condition on them to handle. We note from
  Lemma~\ref{lem:invM} that 
\[
\mtx{cc}{ M_{\Rk\Ck} & e \\ e^T & 0 }
\mtx{c}{ \Deltait u_{\Ck}\drop \\ \Deltait \alpha_k } = \mtx{c}{e_i \\ 0}.
\]
Hence, we cannot have $\Deltait u_\Ck=0$, since $e \Deltait
\alpha_k=e_i$ cannot have a solution for $k\ge 2$. It follows that
$\Deltait u_\Ck\ne 0$ so that $e\T \Deltait u_\Ck= 0$ implies that
$\Deltait u_\Ck$ must have both strictly positive and strictly
negative components. The situation is analogous for $\Deltait
v_\Rk$. Therefore, there is a choice of selecting $s$ negative or
positive analogously to (\ref{eqn-poss}) and
(\ref{eqn-negs}). Consequently, there are two different possible index
pairs, one corresponding to $\Deltait \gamma_k\ge 0$ and one
corresponding to $\Deltait \gamma_k\le 0$.
\end{proof}
 
\begin{theorem}\label{thm-M}
  Let $M$ be a given $m\times n$ matrix. For the optimality conditions
  given by Lemma~\ref{lem:partitionM}, let
  $r=\abs{\Rp}=\abs{\Cp}$. Then, $r\le\min\{m,n\}$ and there are pairs
  of row and column indices $(i_k,j_k)$, $k=1,\dots,r$, which generate
  sets of row indices $R_1=\{i_1\}$, $\Rkp=\Rk\cup\{i_{k+1}\}$, and
  sets of column indices $C_1=\{j_1\}$, $\Ckp=\Ck\cup\{j_{k+1}\}$,
  with $i_{k+1}\in\{1,\dots,m\}\backslash \Rk$ and
  $j_{k+1}\in\{1,\dots,n\}\backslash \Ck$, such that for each $k$,
\[
\mtx{cc} { M_{\Rk\Ck}\drop & e \\ e^T & 0}
\] 
is nonsingular, and $(u_\Ck,\gamma_k)$ and $(v_\Rk,\gamma_k)$ computed from
\begin{dbleqnarray*}
\mtx{cc}{ M_{\Rk\Ck}\drop & e \\ e^T & 0 }
\mtx{c}{ u_{\Ck}\drop \\ \gamma_k } & = & \mtx{c}{0 \\ 1}, &
\mtx{cc}{ M_{\Rk\Ck}^T & e \\ e^T & 0 }
\mtx{c}{ v_{\Rk}\drop \\ \gamma_k } & = & \mtx{c}{0 \\ 1},
\end{dbleqnarray*}
are optimal to $(P_k)$ and $(D_k)$ respectively. In addition,
$(u_{C_r},\gamma_r)$ and $(v_{R_r},\gamma_r)$ together with $u_j=0$
for $j\in\{1,\dots,n\}\backslash C_r$ and $v_i=0$ for
$i\in\{1,\dots,m\}\backslash R_r$ are optimal to $(P)$ and $(D)$
respectively.

Such a sequence of pairs of row and column indices $(i_k,j_k)$,
  $k=1,\dots,r$, exists also if one of the additional requirements
  $\gamma_{k+1}\le \gamma_k$, $k=1,\dots,r-1$, or   $\gamma_{k+1}\ge
  \gamma_k$, $k=1,\dots,r-1$, are imposed.  
\end{theorem}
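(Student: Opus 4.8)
The plan is to run the proof of Theorem~\ref{thm-LP} almost verbatim, with Lemma~\ref{lem-reducegamma} in place of Lemma~\ref{lem-reducedimLP}, and to obtain the monotonicity addendum by exploiting that Lemma~\ref{lem-reducegamma} supplies \emph{two} admissible reduction steps at every stage, one along which $\gamma$ does not increase and one along which it does not decrease.

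First I would record that $r\le\min\{m,n\}$ is immediate, since $\Rp\subseteq\{1,\dots,m\}$ and $\Cp\subseteq\{1,\dots,n\}$ with $\abs{\Rp}=\abs{\Cp}=r$ (in fact $r\ge 1$, as $e\T u=1$ forces a nonempty column support). For $r=1$ there is nothing to reduce and the monotonicity requirement is vacuous, so the assertion is exactly Lemma~\ref{lem:partitionM}. For $r\ge 2$ I would set $R_r=\Rp$, $C_r=\Cp$, $\gamma_r=\gamma$, observe that the conclusions of Lemma~\ref{lem:partitionM} are precisely the hypotheses of Lemma~\ref{lem-reducegamma} at level $k=r$, and then apply Lemma~\ref{lem-reducegamma} repeatedly for $k=r,r-1,\dots,2$. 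The point to verify is that the conclusions of Lemma~\ref{lem-reducegamma} at level $k$ --- nonsingularity of $\mtx{cc}{M_{\Rkm\Ckm} & e \\ e\T & 0}$ and nonnegativity of the reduced basic solutions --- are again exactly the hypotheses needed to invoke the lemma at level $k-1$, so that the recursion closes up. This yields an ordering $i_r,i_{r-1},\dots,i_1$ of $\Rp$ and $j_r,j_{r-1},\dots,j_1$ of $\Cp$ together with, for each $k$, a nonsingular bordered submatrix and basic solutions $(u_\Ck,\gamma_k)\ge 0$, $(v_\Rk,\gamma_k)\ge 0$; these are feasible to $(P_k)$ and $(D_k)$ (the inequality constraints being satisfied with equality) and have common objective value $\gamma_k$ by Lemma~\ref{lem:invM}, hence are optimal. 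Reversing the ordering so that $k$ runs $1,\dots,r$ produces the index sets $R_1=\{i_1\}$, $\Rkp=\Rk\cup\{i_{k+1}\}$, $C_1=\{j_1\}$, $\Ckp=\Ck\cup\{j_{k+1}\}$ of the statement, and at $k=r$ we recover $R_r=\Rp$, $C_r=\Cp$, $\gamma_r=\gamma$, so Lemma~\ref{lem:partitionM} certifies that the zero-padded $(u_{C_r},\gamma_r)$ and $(v_{R_r},\gamma_r)$ are optimal to $(P)$ and $(D)$.

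For the monotonicity addendum I would simply fix the branch of Lemma~\ref{lem-reducegamma} once and for all. Using the first index pair $(i_k^{(1)},j_k^{(1)})$ at every stage gives $\gamma_{k-1}\le\gamma_k$ for $k=r,r-1,\dots,2$, i.e. $\gamma_1\le\gamma_2\le\cdots\le\gamma_r$, which after the reversal reads $\gamma_{k+1}\ge\gamma_k$ for $k=1,\dots,r-1$; using the second index pair $(i_k^{(2)},j_k^{(2)})$ at every stage gives $\gamma_1\ge\gamma_2\ge\cdots\ge\gamma_r$, hence $\gamma_{k+1}\le\gamma_k$ for $k=1,\dots,r-1$. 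Thus either of the two additional requirements can be met.

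I do not anticipate a genuine obstacle: the substance is carried by Lemmas~\ref{lem:partitionM}, \ref{lem:invM} and \ref{lem-reducegamma}, and what remains is bookkeeping. The two points that need care are that the output of each reduction step must match the input hypotheses of the next so that the recursion is well posed, and that a globally monotone $\gamma$-sequence is obtained only if the branch of Lemma~\ref{lem-reducegamma} is selected \emph{consistently} across all stages --- an inconsistent choice would in general destroy monotonicity.
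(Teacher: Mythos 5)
Your proposal is correct and follows essentially the same route as the paper: the paper's proof is a two-line remark that the argument is analogous to Theorem~\ref{thm-LP} and that the monotonicity addendum follows from the two admissible pivots supplied by Lemma~\ref{lem-reducegamma}, which is exactly what you spell out (including the correct observation that the branch must be chosen consistently across all stages and that the direction of monotonicity flips under the reversal of the ordering).
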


\begin{proof}
The result is analogous to Theorem~\ref{thm-LP}. The only difference
is the final statement on additional requirement $\gamma_{k+1}\le
\gamma_k$, $k=1,\dots,r-1$, or $\gamma_{k+1}\ge\gamma_k$,
$k=1,\dots,r-1$, not contradicting the existence of the short pivot
sequence. This is a consequence of the proved existence of two potential
pivots in the reduction step of Lemma~\ref{lem-reducegamma}.
\end{proof}

\section{Summary}

For a pair of linear programs in canonical forms that both are
feasible, we have shown the existence of a sequence of pivots of
length at most $\min\{m,n\}$ that leads from the origin to a
primal-dual pair of optimal solutions. At each step, the pivot creates
a nonsingular submatrix of the constraint matrix that increases in
dimension by one row an column by including the row and column of the
pivot element. By solving two linear systems involving the submatrix a
pair of primal and dual solutions are obtained. These solutions are
optimal for the restricted problem where only rows and columns of the
submatrix are included. At the final step, the solutions are optimal
to the full primal and dual problem respectively.

We have not been able to give rules for an algorithm taking into
account global information that would give this correct path without
potentially enumerating all possible paths, which might be
exponentially many. We therefore only publish the result as is, and
hope that the result will be useful for further understanding of
pivoting methods for linear programming.

We also note in passing that the reduction of
Lemma~\ref{lem-reducedimLP} and Lemma~\ref{lem-reducegamma} can be
done from an arbitrary basis matrix if the nonnegativity condition is
omitted. Therefore, there is a sequence of pivots of length at most
$\min\{m,n\}$ leading from any pair of primal-dual basic solutions to
the origin, Consequently, Theorem~\ref{thm-LP} and Theorem~\ref{thm-M}
imply that there is an overall bound of at most $2\min\{m,n\}$ pivots
leading from any pair of primal-dual basic solutions to an optimal
pair of primal-dual basic solutions. This is at least as tight as the
bound $n+m$ given by Fukuda and Terlaky~\cite{FT99}.

\section*{Acknowledgement}

We thank Krister Svanberg for many helpful discussions.

\bibliography{references}
\bibliographystyle{myplain}

\end{document}